\documentclass[12pt]{amsart}

\usepackage{amsthm, amsmath, amssymb, amsrefs}

\newcommand{\T}{\mathbb{T}} 
\newcommand{\C}{\mathbb{C}} 
\newcommand{\D}{\mathbb{D}} 
\newcommand{\cd}{\overline{\D}} 

\numberwithin{equation}{section}

\newtheorem{theorem}{Theorem}[section]

\newtheorem{lemma}[theorem]{Lemma}

\theoremstyle{definition}
\newtheorem{definition}[theorem]{Definition}

\title{The von Neumann inequality for $3\times 3$ matrices}

\author{Greg Knese}

\address{Washington University in St. Louis\\ Department of
  Mathematics\\ St. Louis, Missouri 63130}

\email{geknese@math.wustl.edu}

\date{\today}

\thanks{Partially supported by NSF grant DMS-1363239}

\keywords{von Neumann inequality, polydisc, polydisk, rational inner
  functions, Schur-Agler class, multi-variable operator theory,
  Schwarz lemma, Pick interpolation}


\begin{document}
\maketitle

\section{Introduction}

The purpose of this note is to explain how recent results of
Kosi\'nski \cite{LK} provide a proof of the von Neumann inequality for
$d$-tuples of $3\times 3$ commuting contractive matrices.
 
\begin{definition}
A $d$-tuple of pairwise commuting contractive matrices or
operators $T = (T_1,\dots, T_d)$ satisfies the
\emph{von Neumann inequality} if for every $p \in \C[z_1,\dots,z_d]$
\[
\|p(T) \| \leq \sup_{z \in \T^d} |p(z)|.
\]
\end{definition}

Recall that contractive means $\|T_j\| \leq 1$ and $\T^d$ is the unit
$d$-torus in $\C^d$.

The von Neumann inequality holds for a single contractive
operator---von Neumann's original result \cite{vN}---as well as for a
pair of commuting contractions---a result of And\^{o} \cite{ando}.
For $d>2$ there are known examples of $d$-tuples of commuting
contractions for which the von Neumann inequality fails.  Varopoulos
\cite{varo} proved the existence of counterexamples with a
probabalistic argument and later Kaijser and Varopoulos (see addendum
to \cite{varo}) as well as Crabb and Davie \cite{Crabb} found explicit
counterexamples.  These counterexamples were all given by finite
matrices.  It turns out that the von Neumann inequality holds for
$d$-tuples of $2\times 2$ commuting contractive matrices; this result
is essentially equivalent to the Schwarz lemma on the polydisc.  On
the other hand, Holbrook \cite{Holbrook} has found a 3-tuple of
$4\times 4$ matrices which fail the von Neumann inequality.

A great deal of effort has been expended to put in the last piece of
this particular puzzle: Does the von Neumann inequality hold for
$d$-tuples of $3\times 3$ commuting contractive matrices?  For
evidence of interest in this question see \cite{ChoiDavidson,
  Holbrook, Lotto}.

Recent work of Kosi\'nski proves that the answer is yes.

\begin{theorem} \label{mainthm} The von Neumann inequality holds for
  $d$-tuples of $3\times 3$ commuting contractive matrices.
\end{theorem}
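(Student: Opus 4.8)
The plan is to combine three ingredients: (i) a reduction, special to $3\times3$ matrices, of the von Neumann inequality to a finite interpolation problem on $\D^d$ with at most three nodes (counted with multiplicity); (ii) Kosi\'nski's theorem that such a ``small'' interpolation problem, once it is solvable in the closed unit ball of $H^\infty(\D^d)$, is already solvable in the Schur--Agler class; and (iii) the fact that a function in the unit ball of the Schur--Agler class on $\D^d$ obeys the von Neumann inequality against \emph{every} commuting $d$-tuple of contractions (Agler's realization theorem, or simply the definition of the class). Ingredient (ii) is Kosi\'nski's; ingredients (i) and (iii) are what this note must supply.

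Fix commuting contractive $3\times3$ matrices $T=(T_1,\dots,T_d)$ and $p\in\C[z_1,\dots,z_d]$ with $\sup_{\T^d}|p|\le1$; the goal is $\|p(T)\|\le1$. Replacing each $T_j$ by $rT_j$ with $r\uparrow1$ (and, if convenient, $p$ by $rp$), it suffices to treat strict contractions, so that the joint spectrum $\sigma(T)$ is a finite subset of $\D^d$; since $T$ acts on $\C^3$ there are at most three joint eigenvalues. Decomposing $\C^3$ into the joint generalized eigenspaces $\mathcal N_\lambda$, $\lambda\in\sigma(T)$, one sees that the holomorphic functional calculus $f\mapsto f(T)$ depends only on the jet of $f$ at each $\lambda$ of order at most $\dim\mathcal N_\lambda-1$, and $\sum_\lambda\dim\mathcal N_\lambda=3$. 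Thus ``the jet of $p$ at $\sigma(T)$'' constitutes an interpolation problem on $\D^d$ with at most three nodes counted with multiplicity, and it is solvable in the closed unit ball of $H^\infty(\D^d)$, witnessed by $p$ itself.

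Next I would feed this datum into Kosi\'nski's results to obtain a function $g$ in the closed unit ball of the Schur--Agler class on $\D^d$ having the same jet as $p$ at $\sigma(T)$; since the functional calculus sees only that jet, $g(T)=p(T)$. (The confluent cases, corresponding to non-diagonalizable $T$, must be covered at this step; alternatively one may first perturb $T$ to a nearby commuting $3\times3$ tuple that is simultaneously diagonalizable with three distinct joint eigenvalues, reducing to an honest three-point problem, and then pass to the limit, once one checks that such a perturbation always exists in this dimension.) Finally, because every element of the unit ball of the Schur--Agler class on $\D^d$ admits a unitary colligation realization, one has $\|g(S)\|\le1$ for every commuting $d$-tuple of strict contractions $S$, with the value agreeing with the holomorphic functional calculus since $\sigma(S)\subset\D^d$. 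Taking $S=T$ gives $\|p(T)\|=\|g(T)\|\le1$.

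The genuinely hard input is Kosi\'nski's three-node interpolation theorem, which I am treating as a black box. The remaining work — and the only place difficulty can hide on our side — is the first reduction: confirming carefully that for $3\times3$ matrices the functional calculus really does reduce the problem to an ``at most three-node'' interpolation problem (in particular handling the non-diagonalizable $T$), and checking that a Schur--Agler solution may legitimately be substituted into the functional calculus of $T$ with the stated norm bound. It is worth recording that the sharpness of ``three'' here is consistent with the known failure of the von Neumann inequality for $4\times4$ matrices, where the associated four-node problems on $\D^3$ need not be Schur--Agler solvable.
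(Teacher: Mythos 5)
Your outline is essentially the paper's: reduce to strict contractions, reduce the functional calculus to a three-node interpolation problem on $\D^d$, invoke Kosi\'nski for a Schur--Agler (indeed, And\^o-type) solution, and substitute back into $T$. Two points of comparison are worth recording. First, your primary route through confluent jets does not actually go through as stated: Kosi\'nski's theorem, as used here, concerns three \emph{distinct} nodes of $\D^d$ with scalar targets, and no confluent (higher-order jet) version is available to quote. So the parenthetical ``alternative'' you mention --- perturbing $T$ to a nearby commuting $d$-tuple that is simultaneously diagonalizable with distinct joint eigenvalues --- is not optional but is the essential step, and the fact that such perturbations exist is a nontrivial theorem special to small matrices (it fails in general for $4\times 4$); the paper cites Gerstenhaber, Guralnick, Lotto, and Holbrook--Omladi\v{c} for it. You correctly flag that this needs checking, but it should be promoted from a fallback to the main line of the argument. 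Second, where you invoke the full Agler realization machinery to pass from ``Schur--Agler solution'' to $\|g(T)\|\le 1$, the paper proceeds more economically: Kosi\'nski's solution has the explicit form $F_1(F_2(z_1,z_2),z_3)$ after a permutation of variables, so two applications of And\^o's inequality give the contractivity of $F(T)$ directly, with no need for unitary colligations. (The paper also performs further normalizations --- M\"obius maps sending one node to $0$, genericity of the nodes, extremality and non-degeneracy --- that are required before Kosi\'nski's lemmas literally apply; you have black-boxed these into ``Kosi\'nski's theorem,'' which is acceptable but hides where the degenerate cases, handled by a Schwarz-lemma argument in the paper, are disposed of.) Your closing remark about $4\times 4$ matrices is apt, though the obstruction there is already the failure of the diagonalizable-perturbation step, not only of four-node Schur--Agler solvability.
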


This is also interesting in light of the fact that there exists a
$4$-tuple of $3\times 3$ commuting contractions that do not
\emph{dilate} to commuting unitary operators \cite{ChoiDavidson}.  
Possessing a unitary dilation is a much stronger property than
satisfying a von Neumann inequality; in fact, it means that a von
Neumann inquality holds for matrix-valued polynomials of all matrix
sizes.  Nevertheless, all of the von Neumann inequalities cited above
have dilation counterparts: the Sz.-Nagy dilation theorem vis-\`a-vis
von Neumann's inquality, And\^{o}'s dilation theorem (And\^{o}'s
actual result in \cite{ando}), $d$-tuples of $2\times 2$ commuting
contractive matrices always dilate \cite{Drury, Holbrook2}.  The
situation of a von Neumann inequality without a dilation theorem is
not uncommon though, and it is closely related to the distinction
between spectral sets and complete spectral sets. See \cite{DM} for
more information.

In the next two sections, we point out some known reductions and then
explain how Kosi\'nski's work proves Theorem \ref{mainthm}.  In the
third and final section, we explain some other operator theory related
consequences of \cite{LK}.  Namely, solvable three-point Pick
interpolation problems on the polydisc can always be solved with a
rational inner function in the Schur-Agler class.

\section{Reductions}
Let $T=(T_1,\dots, T_d)$ be a $d$-tuple of $3\times 3$ commuting
contractive matrices.  To start with, we can assume the matrices are
strict contractions ($\|T_j\| < 1$, $j=1,\dots, d$) and then the theorem
will follow by continuity.  Next, a $d$-tuple of $3\times 3$ commuting
matrices can always be perturbed to a simultaneously diagonalizable
$d$-tuple of commuting matrices.  The reference \cite{HO} points out
several places where this is proven; see \cite{Gerstenhaber, Guralnick,
  Lotto}.

After adjusting our operators to be nicer, we will replace polynomials
with functions that are less nice.  If $T$ is now a $d$-tuple of
commuting simultaneously diagonalizable strictly contractive $3\times
3$ matrices, then it suffices to prove
\[
\|f(T)\| \leq 1
\]
for all $f:\D^d \to \D$ holomorphic on the unit $d$-dimensional
polydisc.  This follows from the fact that such holomorphic functions
can be approximated locally uniformly on $\D^d$ by polynomials $p \in
\C[z_1,\dots, z_d]$ with supremum norm at most $1$ on $\D^d$ (or
$\T^d$ by the maximum principle).  See Rudin \cite{Rudin}.  

Using bounded holomorphic functions makes it possible to apply
M\"obius transformations to the matrices $T_1,\dots, T_d$ in order to
force one of the joint eigenvalues of $T$ to be $0 \in \C^d$, while
still maintaining all other properties of $T$.  We can also apply a
M\"obius transformation to $f:\D^d \to \D$ and assume $f(0)=0$.

Let $0,z,w \in \D^d$ be the joint eigenvalues of $T$ with
corresponding eigenvectors $e, u, v\in \C^3$.  Then, $f(T)$ is the
$3\times 3$ matrix with eigenvalues $0, \sigma=f(z), \tau=f(w) \in \D$
and eigenvectors $e,u,v$.  It now becomes of interest to understand
all holomorphic functions $g:\D^d \to \D$ which solve the following
interpolation problem:
\begin{equation} \label{interp}
\begin{aligned}
0 &\mapsto 0 \\
z & \mapsto \sigma \\
w & \mapsto \tau
\end{aligned}
\end{equation}

Theorem \ref{mainthm} will follow from the next result which is
explained in the next section.

\begin{theorem}[Kosi\'nski] \label{interpthm} If the interpolation
  problem \eqref{interp} can be solved with $g:\D^d\to \D$
  holomorphic, then there exist holomorphic $F_1,F_2:\D^2 \to \D$ such
  that \eqref{interp} can be solved with a function of the form
\begin{equation} \label{form}
F(z) = F_1(F_2(z_1,z_2),z_3)
\end{equation}
after possibly permuting the variables.
\end{theorem}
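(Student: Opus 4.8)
The plan is to reconstruct Kosi\'nski's analysis of the three-point Nevanlinna--Pick problem on the polydisc. Begin with the standard preliminary reductions: we may assume the nodes $0,z,w$ are distinct, discard the degenerate configurations of the target data (which reduce to one- or two-variable interpolation and can be treated directly), and normalize the problem to be \emph{extremal}, meaning that no solution $g$ satisfies $\sup_{\D^d}|g|<1$; extremality is what ultimately forces enough rigidity on the solution set to produce an algebraic solution. The goal is then to manufacture, out of the merely hypothesized solution $g:\D^d\to\D$, a solution of the cascade shape $F=F_1\circ(F_2(z_1,z_2),z_3)$.

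The construction proceeds in two stages. First, permute the coordinates so that $z_1,z_2$ is a ``worst pair'' for the problem --- the two coordinate directions along which $g$ is most tightly constrained, as measured by the Schwarz--Pick quantities $|z_j|$, $|w_j|$, and $\rho(z_j,w_j)$ (here $\rho$ is the pseudohyperbolic distance on $\D$) extracted from $g$ --- and produce a rational inner $F_2:\D^2\to\D$ of low degree with $F_2(0)=0$, selected from the finite-parameter family of such functions, so that the \emph{induced} three-point problem on the bidisc,
\[
(0,0)\mapsto 0,\qquad (F_2(z_1,z_2),z_3)\mapsto\sigma,\qquad (F_2(w_1,w_2),w_3)\mapsto\tau,
\]
remains solvable. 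Second, solve this bidisc problem with a holomorphic $F_1:\D^2\to\D$, and set $F(z)=F_1(F_2(z_1,z_2),z_3)$. The second stage is the tractable one: by And\^o's theorem the scalar Schur and Schur--Agler classes coincide on $\D^2$, so solvability of a three-point Pick problem on the bidisc is governed by an explicit positivity criterion, and in the solvable case a holomorphic solution $F_1$ exists. The first stage is where the hypothesis is spent: one uses the existence of $g$ on all of $\D^d$, together with its values and first-order data at $0,z,w$, to verify the bidisc criterion for a suitable $F_2$, the free parameters in $F_2$ supplying exactly the room needed to absorb the residual obstruction.

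The main obstacle is that the problem is irreducibly three-dimensional, so that neither naive shortcut succeeds: one cannot simply delete $d-2$ of the variables, since the restriction of $g$ to a pair of original coordinates may present an unsolvable bidisc problem, and one cannot compress all coordinates through a single map $F_2:\D^d\to\D$, since that typically destroys both Schur--Agler membership and the solvability of what remains. The technical core is therefore to show that the intermediate cascade --- exactly one genuine compression $F_2(z_1,z_2)$ together with one untouched coordinate $z_3$ --- always suffices, and that the distinguished triple of coordinates can be chosen coherently; this rests on a careful analysis of how the fibers of a candidate solution over the final variable interact with a bidisc Pick problem, together with the rigidity furnished by extremality. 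Finally, the resulting $F=F_1\circ(F_2(z_1,z_2),z_3)$ lies automatically in the Schur--Agler class of $\D^d$ --- $F_2$, being a two-variable Schur function, is Schur--Agler, $z_3$ trivially is, and for commuting contractions $T=(T_1,\dots,T_d)$ the operators $F_2(T_1,T_2)$ and $T_3$ form a commuting pair of contractions to which the two-variable Schur function $F_1$ may be applied --- which is precisely the feature that makes Theorem~\ref{interpthm} yield Theorem~\ref{mainthm}.
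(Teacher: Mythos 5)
Your reductions track the paper's almost exactly: perturb the nodes into generic position so that the quantities $|z_j|$, $|w_j|$, $\rho(z_j,w_j)$ are pairwise distinct, normalize to an extremal problem by scaling $\sigma,\tau$, and dispose of the degenerate case, where some two-point subproblem is already extremal and Schwarz--Pick rigidity (the paper's Lemma \ref{details}) forces the solution to be a single coordinate function, hence trivially of the form \eqref{form}. Up to that point your outline is correct and coincides with what the paper does.

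The gap is everything after that. The entire mathematical content of Theorem \ref{interpthm} lies in the claim that, for an extremal, non-degenerate, strictly $3$-dimensional problem, one can select a distinguished pair of coordinates and a holomorphic $F_2:\D^2\to\D$ so that the induced three-point problem $(0,0)\mapsto 0$, $(F_2(z_1,z_2),z_3)\mapsto\sigma$, $(F_2(w_1,w_2),w_3)\mapsto\tau$ on the bidisc is solvable. Your proposal asserts that the free parameters in $F_2$ ``supply exactly the room needed to absorb the residual obstruction'' and that the choice of the distinguished triple ``rests on a careful analysis of how the fibers of a candidate solution over the final variable interact with a bidisc Pick problem,'' but no such analysis is given: you do not identify which pair of variables to compress, do not construct $F_2$, and do not verify the bidisc Pick positivity condition from the data of $g$. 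That step is precisely the content of Lemmas 3 and 5 of Kosi\'nski's paper \cite{LK}, which the present note deliberately quotes rather than reproves, and which require a genuinely delicate study of extremal three-point problems; none of that work appears in your proposal. As written, your argument establishes only the preliminary reductions and the (easy, and already-stated-in-the-paper) observation that a function of the form \eqref{form} lies in the Schur--Agler class via And\^o's theorem; the core of the theorem remains unproved.
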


Indeed, by And\^{o}'s inequality $S = F_2(T_1,T_2)$ is a contraction
commuting with $T_3$ and therefore $F(T) = F_1(S,T_3)$ is a
contraction equal to $f(T)$ as above.  This proves Theorem
\ref{mainthm} given Theorem \ref{interpthm}.

\section{The three point Pick problem on the polydisc}

Theorem \ref{interpthm} follows from work in \cite{LK} after making
some further reductions to put us in the most interesting situation
(that of \emph{extremal} and \emph{non-degenerate} interpolation
problems). 

First, it is useful to perturb the nodes $0, z=(z_1,\dots, z_d),
w=(w_1,\dots,w_d)$ into a more generic position.  Let $\rho(a,b) =
\left|\frac{a-b}{1-\bar{a}b}\right|$ be the pseudo-hyperbolic distance
on the unit disk.  Perturb $z,w$ so that all of the quantities below
are distinct, yet $T$ is still strictly contractive:
\[
|z_1|,\dots, |z_d|, |w_1|,\dots, |w_d|, \rho(z_1,w_1),\dots,
\rho(z_d,w_d).
\]
We will see momentarily why this is useful.  Functions as in
\eqref{form} form a normal family so we can approximate the less
generic interpolation problems by the generic ones.

The interpolation problem \ref{interp} is said to be \emph{extremal}
if it cannot be solved with a holomorphic function $g$ satisfying
$\sup_{\D^d} |g| <1$.  There is no harm in multiplying $\sigma, \tau$
by $r>1$ if necessary to force the problem to be extremal.  

The interpolation problem \ref{interp} is said to be
\emph{non-degenerate} if no two-point subproblem is extremal.  If a
two point subproblem is extremal (the degenerate case) then one of the
following holds
\[
|\sigma| = \max_{j=1,\dots, d} |z_j| \text{ or } 
|\tau| = \max_{j=1,\dots, d} |w_j| \text{ or } 
\rho(\sigma, \tau) = \max_{j=1,\dots, d} \rho(z_j,w_j).
\]
By our genericity assumption, whichever maximum occurs above will
occur at a unique $j$ and this forces the solution function to be
unique and to depend on one variable.  We provide some details in
Lemma \ref{details} at the end of this section.

Thus, in the degenerate case we can certainly solve with a function of
the form \eqref{form} and we may now assume our interpolation problem
is non-degenerate.

To finish, we may quote appropriate results from \cite{LK}.  Lemma 3 of
\cite{LK} states that for $d=3$, if the interpolation problem
\eqref{interp} is extremal, non-degenerate, and strictly
3-dimensional, then it can be solved with a function of the form
\eqref{form}.  Strictly 3-dimensional means the problem can be solved
with a function depending on 3 variables but not with a function only
depending on 2 variables.  We may certainly assume that we are in the
strictly 3-dimensional case since otherwise there is nothing to prove.

For $d>3$, Lemma 5 of \cite{LK} states that if \eqref{interp} is
extremal and non-degenerate, then after permuting variables if
necessary the problem can be solved with a function of the form
\eqref{form}.  This completes our explanation of Kosi\'nski's Theorem
\ref{interp}.

We used the following fact above.
\begin{lemma} \label{details} Suppose $f:\D^d \to \D$ is holomorphic,
  $f(0)=0$ and there exists $w \in \D^d$ such that $f(w) = w_1$ and
  $|w_1| > |w_j|$ for all $j\ne 1$.  Then, $f(z) \equiv z_1$ for all
  $z \in \D^d$.
\end{lemma}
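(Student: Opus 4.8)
The statement is a rigidity/Schwarz-lemma type result on the polydisc: $f:\D^d\to\D$ holomorphic with $f(0)=0$, and at one point $w$ the value $f(w)=w_1$ is as large as possible relative to the coordinatewise moduli. I want to conclude $f\equiv z_1$. The natural tool is the Schwarz–Pick lemma on the disc combined with the fact that the polydisc maps into $\D$ in a way compatible with the coordinate projection $z\mapsto z_1$.

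Here is how I would carry it out. First, restrict $f$ to the analytic disc $\phi:\D\to\D^d$, $\phi(\lambda)=\lambda w/|w_1|$... actually cleaner: consider $\phi(\lambda) = (\lambda, w_2\lambda/w_1,\dots,w_d\lambda/w_1)$ scaled so that $\phi$ lands in $\D^d$. Since $|w_1|>|w_j|$, the map $\lambda\mapsto (\lambda w_1/|w_1|, \dots)$... let me instead use the slice through $0$ and $w$: define $g(\lambda)=f(\lambda w/|w_1|)$ for $|\lambda|<|w_1|\le 1$; wait, $\lambda w/|w_1|$ has first coordinate $\lambda w_1/|w_1|$, of modulus $|\lambda|<1$, and other coordinates of modulus $|\lambda||w_j|/|w_1|<|\lambda|$, so $\lambda w/|w_1|\in\D^d$ for $|\lambda|<1$. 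Then $h(\lambda):=f(\lambda w/|w_1|)$ is a holomorphic self-map of $\D$ with $h(0)=0$ and $h(|w_1|)=f(w)=w_1$, so $|h(|w_1|)|=|w_1|$, i.e. $h$ attains equality in the Schwarz lemma at the interior point $|w_1|$. Hence $h(\lambda)=e^{i\theta}\lambda$, and comparing at $\lambda=|w_1|$ forces $e^{i\theta}=w_1/|w_1|$, so $h(\lambda)=\lambda w_1/|w_1|$, equivalently $f(\lambda w/|w_1|)=\lambda w_1/|w_1|$ for all $|\lambda|<1$.

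Now I need to upgrade this "equality on one slice" to $f(z)\equiv z_1$ on all of $\D^d$. The key point is that $f(z)-z_1$ vanishes on the slice $\{\lambda w/|w_1|:\lambda\in\D\}$, but that alone is not enough — the real leverage is the Schwarz-lemma equality, which pins down the full derivative behavior. The clean argument: the function $F(z):=f(z)$ and the coordinate function $z_1$ both map $\D^d\to\D$, and on the slice $s\mapsto sw/|w_1|$ they agree and both equal the extremal map; differentiating in the transverse directions, the constraint $|f|<1$ together with the boundary Schwarz-lemma rigidity (Julia–Carathéodory / the strong form of the polydisc Schwarz lemma) forces $f$ to be independent of the transverse variables along that slice, hence $f\equiv z_1$. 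A more self-contained route: consider $\psi(z)=\dfrac{f(z)}{z_1}$ where... no, division is delicate near $z_1=0$.

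**Cleanest completion.** Use the following standard fact, which I would invoke as the real engine: if $G:\D^d\to\D$ is holomorphic and there is a nonconstant analytic disc $\varphi:\D\to\D^d$ such that $G\circ\varphi$ is an automorphism of $\D$ (equivalently, $G\circ\varphi$ attains equality in Schwarz–Pick), then $G$ is "extremal along $\varphi$" and, after composing $G$ with the inverse automorphism, $G\circ\varphi=\mathrm{id}$ and $G$ depends on $\varphi$ only in a rigid way. Concretely, here $G=f$, $\varphi(\lambda)=\lambda w/|w_1|$, and we get equality. Since additionally $f$ agrees with the coordinate projection $z\mapsto z_1$ on the image of $\varphi$ AND $f(0)=0=\pi_1(0)$, and $\pi_1$ is itself extremal along $\varphi$ in the same way, the difference $f-\pi_1$ must vanish identically: this is because the extremal property means $1-|f(z)|$ is comparable to $1-\max_j|z_j|$ in a precise first-order sense along $\varphi$, and the genericity/maximality hypothesis $|w_1|>|w_j|$ guarantees that the unique coordinate governing the boundary behavior near $\varphi$ is the first, forcing $f=z_1$. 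I would write this by passing to the boundary: for $\lambda\to\zeta\in\T$, $\varphi(\lambda)\to (\zeta w_1/|w_1|, \dots)$ whose only modulus-$1$ coordinate is the first, apply the Julia–Carathéodory lemma on the polydisc to conclude the angular derivative of $f$ at that boundary point is carried entirely by the $z_1$ direction, and then use the uniqueness in three-point (here two-point) Pick interpolation — or simply Lemma-type rigidity — to conclude $f\equiv z_1$.

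**Main obstacle.** The passage from equality on a single analytic slice to the global identity $f\equiv z_1$ is the crux; the one-disc Schwarz lemma is routine, but one must correctly invoke the polydisc boundary Schwarz lemma (Julia–Carathéodory on $\D^d$) and use the strict inequality $|w_1|>|w_j|$ to ensure no other variable can contribute, otherwise $f$ could be, e.g., $z_1z_2$-like near the slice. I expect the write-up to hinge on stating the boundary version carefully and checking the maximality hypothesis rules out mixed terms. An alternative, possibly simpler, finish: restrict to each 2-dimensional slice $(z_1,z_j)$, reduce to the known two-variable Schwarz lemma on $\D^2$ (where this rigidity is classical), and patch — but that requires the hypothesis to survive restriction, which the strict inequalities arrange.
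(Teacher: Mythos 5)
Your first step is exactly the paper's: restrict $f$ to the analytic disc through $0$ and $w$, apply the one-variable Schwarz lemma, and conclude that the slice function is a rotation. That part is fine. The problem is everything after it. You correctly identify the passage from ``equality on one slice'' to ``$f\equiv z_1$ globally'' as the crux, but you never actually bridge it: you propose three different routes (a ``standard fact'' about extremal analytic discs, Julia--Carath\'eodory on the polydisc, and a reduction to a two-variable Schwarz rigidity on $\D^2$), none of which is stated precisely, and phrases like ``$1-|f(z)|$ is comparable to $1-\max_j|z_j|$ in a precise first-order sense'' are placeholders for an argument rather than an argument. As written, the proof is incomplete.

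The missing idea is much more elementary than boundary rigidity, and it is where the hypothesis $|w_1|>|w_j|$ actually does its work. Differentiate the slice identity $h(\zeta)\equiv\zeta$ (with $h(\zeta)=f(\tfrac{\zeta}{w_1}w)$) at $\zeta=0$ to get $\sum_{j=1}^d \frac{\partial f}{\partial z_j}(0)\,\frac{w_j}{w_1}=1$. The polydisc Schwarz inequality (Rudin, p.~179) gives $\sum_{j=1}^d \bigl|\frac{\partial f}{\partial z_j}(0)\bigr|\le 1$, and since $|w_j/w_1|<1$ for $j\ne 1$, the triangle inequality forces $\frac{\partial f}{\partial z_1}(0)=1$ and all other partials to vanish. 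Then $\zeta\mapsto f(\zeta,0,\dots,0)$ fixes $0$ with derivative $1$, so it is the identity by the classical Schwarz lemma, and a known rigidity lemma (Lemma~3.2 of the author's \emph{Indiana Univ.\ Math.\ J.} paper cited as \cite{GK}) upgrades $f(\zeta,0,\dots,0)\equiv\zeta$ to $f\equiv z_1$. So the leverage from $|w_1|>|w_j|$ enters through an infinitesimal computation at the origin, not through angular derivatives at the boundary; without supplying either this computation or a precise (and correctly invoked) substitute for it, your proposal has a genuine gap.
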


\begin{proof} For $\zeta \in \D$, let $h(\zeta) = f(\frac{\zeta}{w_1}
  w)$.  Then, $h(0)=0, h(w_1) = w_1$ and therefore by the classical
  Schwarz lemma, $h(\zeta) \equiv \zeta$.  This implies
\[
\sum_{j=1}^{d} \frac{\partial f}{\partial z_j}(0) \frac{w_j}{w_1} = 1.
\]
By \cite{Rudin} (page 179), $\sum_{j=1}^{d} \left|\frac{\partial
    f}{\partial z_j}(0)\right| \leq 1$.  Since $|w_1| > |w_j|$ for
$j\ne 1$, this can only happen if $\frac{\partial f}{\partial z_1}(0)
= 1$.  This implies $f(\zeta,0,\dots,0) \equiv \zeta$.  By Lemma 3.2
of \cite{GK}, this implies $f(z) \equiv z_1$.
\end{proof}

\section{The Schur-Agler class}

The Schur-Agler class on the polydisc $\D^d$ consists of holomorphic
functions $f:\D^d \to \cd$ such that 
\[
\|f(T)\| \leq 1
\]
for all $d$-tuples $T$ of commuting strictly contractive operators.
Functions of the form \eqref{form} are certainly in the Schur-Agler
class; indeed the argument after the statement of Theorem \ref{interp}
proves this.  

Thus, three-point Pick interpolation problems on the polydisc can be
solved with functions in the Schur-Agler class.  However, the
Schur-Agler class has a well-known interpolation theorem due to Agler;
see \cite{AM}.  Combining these observations we get the following.

\begin{theorem} Given $z_1,z_2,z_3 \in \D^d$ and $t_1,t_2,t_3 \in \D$,
  there exists a holomorphic function $f:\D^d\to \D$ satisfying
  $f(z_j) = t_j$ for $j=1,2,3$ if and only if there exist positive
  semi-definite $3\times 3$ matrices $\Gamma^1, \Gamma^2,\dots,
  \Gamma^d$ such that
\[
1-t_j \overline{t_k} = \sum_{n=1}^{d} (1-z_j^n \overline{z_k^n})
\Gamma_{j,k}^n
\]
for $j,k=1,2,3$.  Here superscripts are used to denote components of
$z_j \in \D^d$.  
\end{theorem}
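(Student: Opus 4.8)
The plan is to prove the two directions separately and to note that only the forward direction draws on new input. The reverse implication is ``soft,'' using only Agler's interpolation theorem for the Schur--Agler class and the maximum principle, while the forward implication repackages the consequences of \cite{LK} already established in Sections 2 and 3.

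For sufficiency, suppose the positive semi-definite matrices $\Gamma^1,\dots,\Gamma^d$ exist with $1-t_j\overline{t_k}=\sum_{n=1}^d(1-z_j^n\overline{z_k^n})\Gamma^n_{j,k}$. This is precisely the hypothesis of Agler's interpolation theorem for the Schur--Agler class of $\D^d$ (see \cite{AM}), which therefore produces a Schur--Agler function $g$ with $g(z_j)=t_j$, a priori mapping $\D^d$ into $\cd$. Since $g(z_1)=t_1\in\D$, the function $g$ is not a unimodular constant, so by the maximum modulus principle on the polydisc $g$ in fact maps $\D^d$ into $\D$; thus $f:=g$ is the desired interpolant.

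For necessity, start from a holomorphic $f:\D^d\to\D$ with $f(z_j)=t_j$. First I would normalize: choosing $\phi\in\mathrm{Aut}(\D^d)$ with $\phi(z_1)=0$ and $\psi\in\mathrm{Aut}(\D)$ with $\psi(t_1)=0$, the composition $\psi\circ f\circ\phi^{-1}$ solves an interpolation problem in the form \eqref{interp}. By Kosi\'nski's Theorem \ref{interpthm}, together with the degenerate and lower-dimensional reductions recorded in Section 3, this normalized problem has a solution $G$ of the form \eqref{form} after a permutation of variables, and every such $G$ lies in the Schur--Agler class by the argument given after Theorem \ref{interpthm}. I would then check that the Schur--Agler class is stable under pre-composition with automorphisms of $\D^d$ and post-composition with automorphisms of $\D$: for the former, a polydisc automorphism acts by disc automorphisms in the separate coordinates up to a permutation, and a disc automorphism sends strict contractions to strict contractions; for the latter, one applies von Neumann's inequality to the single contraction $G(T)$. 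Consequently $\psi^{-1}\circ G\circ\phi$ is a Schur--Agler solution of the original problem $z_j\mapsto t_j$, and the necessity direction of Agler's interpolation theorem applied to it yields positive semi-definite matrices $\Gamma^1,\dots,\Gamma^d$ with the required identities.

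I do not anticipate a real obstacle: the one substantive fact, that a solvable three-point Pick problem on $\D^d$ always admits a Schur--Agler solution, has already been extracted from \cite{LK}. What needs a little care is the bookkeeping around the M\"obius changes of variable --- that normalizing to the form \eqref{interp} and then reversing this normalization keep the solution inside the Schur--Agler class, and that the interpolant lands in $\D$ rather than merely in $\cd$ --- but this is routine.
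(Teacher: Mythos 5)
Your proposal is correct and follows essentially the same route as the paper: the paper likewise obtains the theorem by combining the fact (extracted from Kosi\'nski's work \cite{LK} via the reductions of Sections 2--3) that every solvable three-point problem has a solution of the form \eqref{form}, hence in the Schur--Agler class, with Agler's interpolation theorem from \cite{AM}. The paper leaves the bookkeeping implicit (the M\"obius normalization and its reversal, and the maximum-principle argument that the interpolant lands in $\D$ rather than $\cd$), so your added detail is a faithful elaboration rather than a different argument.
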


This result can be used to prove that every solvable 3-point Pick
problem can be solved with a rational inner function in the
Schur-Agler class.  The paper \cite{LK} already proves this in the
case of non-degenerate extremal problems but then the above general
machinery can be used to show that every solvable problem, whether
extremal or not, has a rational inner solution.

\begin{bibdiv}
\begin{biblist}

		
\bib{AM}{book}{
   author={Agler, Jim},
   author={McCarthy, John E.},
   title={Pick interpolation and Hilbert function spaces},
   series={Graduate Studies in Mathematics},
   volume={44},
   publisher={American Mathematical Society, Providence, RI},
   date={2002},
   pages={xx+308},
   isbn={0-8218-2898-3},
   review={\MR{1882259 (2003b:47001)}},
   doi={10.1090/gsm/044},
}

\bib{ando}{article}{
   author={And{\^o}, T.},
   title={On a pair of commutative contractions},
   journal={Acta Sci. Math. (Szeged)},
   volume={24},
   date={1963},
   pages={88--90},
   issn={0001-6969},
   review={\MR{0155193 (27 \#5132)}},
}

\bib{ChoiDavidson}{article}{
   author={Choi, Man-Duen},
   author={Davidson, Kenneth R.},
   title={A $3\times 3$ dilation counterexample},
   journal={Bull. Lond. Math. Soc.},
   volume={45},
   date={2013},
   number={3},
   pages={511--519},
   issn={0024-6093},
   review={\MR{3065020}},
   doi={10.1112/blms/bds109},
}

\bib{Crabb}{article}{
   author={Crabb, M. J.},
   author={Davie, A. M.},
   title={von Neumann's inequality for Hilbert space operators},
   journal={Bull. London Math. Soc.},
   volume={7},
   date={1975},
   pages={49--50},
   issn={0024-6093},
   review={\MR{0365179 (51 \#1432)}},
}

\bib{DM}{article}{
   author={Dritschel, Michael A.},
   author={McCullough, Scott},
   title={The failure of rational dilation on a triply connected domain},
   journal={J. Amer. Math. Soc.},
   volume={18},
   date={2005},
   number={4},
   pages={873--918},
   issn={0894-0347},
   review={\MR{2163865 (2008i:47024)}},
   doi={10.1090/S0894-0347-05-00491-1},
}

\bib{Drury}{article}{
   author={Drury, S. W.},
   title={Remarks on von Neumann's inequality},
   conference={
      title={Banach spaces, harmonic analysis, and probability theory},
      address={Storrs, Conn.},
      date={1980/1981},
   },
   book={
      series={Lecture Notes in Math.},
      volume={995},
      publisher={Springer, Berlin},
   },
   date={1983},
   pages={14--32},
   review={\MR{717226 (85b:47006)}},
   doi={10.1007/BFb0061886},
}

\bib{Gerstenhaber}{article}{
   author={Gerstenhaber, Murray},
   title={On dominance and varieties of commuting matrices},
   journal={Ann. of Math. (2)},
   volume={73},
   date={1961},
   pages={324--348},
   issn={0003-486X},
   review={\MR{0132079 (24 \#A1926)}},
}

\bib{Guralnick}{article}{
   author={Guralnick, Robert M.},
   title={A note on commuting pairs of matrices},
   journal={Linear and Multilinear Algebra},
   volume={31},
   date={1992},
   number={1-4},
   pages={71--75},
   issn={0308-1087},
   review={\MR{1199042 (94c:15021)}},
   doi={10.1080/03081089208818123},
}

\bib{Holbrook}{article}{
   author={Holbrook, John A.},
   title={Schur norms and the multivariate von Neumann inequality},
   conference={
      title={Recent advances in operator theory and related topics (Szeged,
      1999)},
   },
   book={
      series={Oper. Theory Adv. Appl.},
      volume={127},
      publisher={Birkh\"auser, Basel},
   },
   date={2001},
   pages={375--386},
   review={\MR{1902811 (2003e:47016)}},
}

\bib{Holbrook2}{article}{
   author={Holbrook, John A.},
   title={Inequalities of von Neumann type for small matrices},
   conference={
      title={Function spaces},
      address={Edwardsville, IL},
      date={1990},
   },
   book={
      series={Lecture Notes in Pure and Appl. Math.},
      volume={136},
      publisher={Dekker, New York},
   },
   date={1992},
   pages={189--193},
   review={\MR{1152347 (93d:47015)}},
}

\bib{HO}{article}{
   author={Holbrook, John},
   author={Omladi{\v{c}}, Matja{\v{z}}},
   title={Approximating commuting operators},
   journal={Linear Algebra Appl.},
   volume={327},
   date={2001},
   number={1-3},
   pages={131--149},
   issn={0024-3795},
   review={\MR{1823346 (2002a:15018)}},
   doi={10.1016/S0024-3795(00)00286-X},
}

\bib{GK}{article}{
   author={Knese, Greg},
   title={A refined Agler decomposition and geometric applications},
   journal={Indiana Univ. Math. J.},
   volume={60},
   date={2011},
   number={6},
   pages={1831--1841},
   issn={0022-2518},
   review={\MR{3008252}},
   doi={10.1512/iumj.2011.60.4512},
}

\bib{LK}{misc}{
author = {Kosi\'nski, \L ukasz},
title= {Three-point Nevanlinna Pick problem in the polydisc},
status = {to appear in Proc. Lond. Math. Soc.}
}

\bib{Lotto}{article}{
   author={Lotto, B. A.},
   title={von Neumann's inequality for commuting, diagonalizable
   contractions. I},
   journal={Proc. Amer. Math. Soc.},
   volume={120},
   date={1994},
   number={3},
   pages={889--895},
   issn={0002-9939},
   review={\MR{1169881 (94e:47012)}},
   doi={10.2307/2160483},
}

\bib{vN}{article}{
   author={von Neumann, Johann},
   title={Eine Spektraltheorie f\"ur allgemeine Operatoren eines unit\"aren
   Raumes},
   language={German},
   journal={Math. Nachr.},
   volume={4},
   date={1951},
   pages={258--281},
   issn={0025-584X},
   review={\MR{0043386 (13,254a)}},
}

\bib{Rudin}{book}{
   author={Rudin, Walter},
   title={Function theory in polydiscs},
   publisher={W. A. Benjamin, Inc., New York-Amsterdam},
   date={1969},
   pages={vii+188},
   review={\MR{0255841 (41 \#501)}},
}

\bib{varo}{article}{
   author={Varopoulos, N. Th.},
   title={On an inequality of von Neumann and an application of the metric
   theory of tensor products to operators theory},
   journal={J. Functional Analysis},
   volume={16},
   date={1974},
   pages={83--100},
   review={\MR{0355642 (50 \#8116)}},
}

\end{biblist}
\end{bibdiv}

\end{document}